\theoremstyle{plain}
\newtheorem{theorem}{Theorem} 
\newtheorem{corollary}[theorem]{Corollary}
\newtheorem{proposition}[theorem]{Proposition}
\theoremstyle{remark}
\newcommand{\reel}{\mathbb{R}}
\newcommand{\ent}{\mathbb{Z}}
\newcommand{\comp}{\mathbb{C}}
\newcommand{\abs}[1]{\left\vert #1\right\vert }
\newcommand{\bg}{\medskip\goodbreak}
\newcommand{\vers}{{\,\longrightarrow\,}}
\newcommand{\egdef}{\buildrel{\scriptscriptstyle{\rm def}}\over{=}}
\newcommand{\Log}{\operatorname{Log}}
\newcommand{\Arg}{\operatorname{Arg}}
\begin{document}
\title[Kummer's Theorem in action]
{Generalized Stieltjes constants
	and integrals  involving the log-log function: Kummer's Theorem in action.}
\author[Omran Kouba]{Omran Kouba$^\dag$}
\address{Department of Mathematics \\
Higher Institute for Applied Sciences and Technology\\
P.O. Box 31983, Damascus, Syria.}
\email{\href{mailto:omran_kouba@hiast.edu.sy}{omran\_kouba@hiast.edu.sy}}
\keywords{Gamma function, log-log integrals, Fourier series, numerical series.}
\thanks{$^\dag$ Department of Mathematics, Higher Institute for Applied Sciences and Technology.}

\begin{abstract}
In this note, 
we recall Kummer's Fourier series expansion of the 1-periodic  function that coincides with the logarithm of the Gamma function on the unit interval $(0,1)$, and we use it to find closed forms for some numerical series related to the generalized Stieltjes constants, and some integrals involving the function $x\mapsto \ln \ln(1/x)$.\par

\end{abstract}

\maketitle
\section{\sc Introduction and Notation}\label{sec1}

The aim of this paper is to present an alternative proof of the reflection principle of the first order generalized Stieltjes constants, and to give an alternative approach to the
evaluation of some integrals involving the  function $x\mapsto \ln\ln(1/x)$. The basic tool for this investigation is a result of Kummer recalled below (Theorem \ref{thKum}).

The first order generalized Stieltjes constant $\gamma_1(a)$ is defined for $a\in(0,1)$ by
\[
\gamma_1(a)=\lim_{n\to\infty}\left(\sum_{k=0}^n\frac{\ln(a+k)}{a+k}-\frac{1}{2}\ln^2(n+a)\right).
\]
From this, it is easy to  show that
\[
\gamma_1(a)-\gamma_1(1-a)=\lim_{n\to\infty}\left(\sum_{k=-n}^n\frac{\ln\abs{a+k}}{a+k}\right)\egdef
\sideset{}{'}\sum_{n\in\ent} 
\frac{\ln\abs{a+n}}{a+n},
\]
where the primed sum denotes the ``principal value''  as shown above.
For  integers $p$ and $q$ with $0<p<q$ the difference $\gamma_1(p/q)-\gamma_1(1-p/q)$ can be expressed as follows
\[\gamma_1(p/q)-\gamma_1(1-p/q)
=-\pi \ln(2\pi q e^\gamma) \cot\left(\frac{p\pi}{q}\right) +2\pi\sum_{j=1}^{q-1}
\sin\left(
\frac{2\pi j p}{q}
\right)
\ln\Gamma\left(\frac{ j}{q}\right).
\]
The formula is attributed to Almkvist and Meurman who obtained it by calculating the derivative of the functional equation for the Hurwitz zeta function $\zeta(s,v)$
with respect to $s$ at rational $v$, see \cite{adam}. However, it was
recently discovered  that an equivalent form of this formula
 was already obtained  by Carl Malmsten in 1846 (see \cite{bla}). An elementary proof of this formula will be presented in Proposition
 \ref{pr0}.

In a recent series of articles  (\cite{Moll4},\cite{Moll6},\cite{Moll8},\cite{Moll19},\cite{Var}), the authors  proved some formulas from the \textit{Table of integrals, Series, and Products},  of Gradshteyn and Ryzhik \cite{grad}. Further, the monographs \cite{Moll1,Moll2} are devoted to providing proofs for the formulas in \cite{grad}. In fact, we are particularly interested in integrals involving the  function
$x\mapsto \ln\ln(1/x)$. Indeed, entries $4.325$ of \cite{grad} contain the following evaluations:
\begin{align*}
\int_0^1\frac{\ln(\ln(1/x))}{1+x^2}&=\frac{\pi}{2}\ln\frac{\sqrt{2\pi}\Gamma(3/4)}{\Gamma(1/4)}\\
\int_0^1\frac{\ln(\ln(1/x))}{1+x+x^2}&=\frac{\pi}{\sqrt{3}}\ln\frac{\sqrt[3]{2\pi}\Gamma(2/3)}{\Gamma(1/3)}\\
\int_0^1\frac{\ln(\ln(1/x))}{1+2x\cos t+x^2}&=\frac{\pi}{2\sin t}\ln\frac{(2\pi)^{t/\pi}\Gamma\left(\frac{1}{2}+\frac{t}{2\pi}\right)}{\Gamma\left(\frac{1}{2}-\frac{t}{2\pi}\right)}
\end{align*}

These integrals can be traced back to \cite{Bie}. 
The first of them was the object of a detailed investigation in \cite{Var}, where the author says that his approach can be adapted to prove also the second one. A general approach that yields the first two integrals, and much more evaluations, can also be found in \cite{adam}. This line of investigation was completed by adapting the methods of \cite{Var} to obtain general results that include all the above mentioned integrals in \cite{Moll8}.

Our aim is to present an alternative approach to the evaluation of these integrals.  Our starting point will be  Kummer's Fourier expansion of $\Log \Gamma$, (Theorem \ref{thKum}), where $\Gamma$ is the well-known Eulerian gamma function. This result is attributed to Kummer in (1847), a more accessible reference is \cite[Section 1.7]{and}:
\begin{theorem}[Kummer,\cite{kum}]\label{thKum}
For $0<x<1$,
\[
\ln{\frac{\Gamma(x)}{\sqrt{2\pi}}}
=-\frac{\ln(2\sin(\pi x))}{2}+(\gamma+\ln(2\pi))\left(
\frac{1}{2}-x\right)+\frac{1}{\pi}\sum_{k=1}^\infty
\frac{\ln k}{k}\sin(2\pi k x),
\]
where $\gamma$ is the Euler-Mascheroni  constant.\bg
\end{theorem}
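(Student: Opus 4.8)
The plan is to recognize the right--hand side as the Fourier series of the function $L(x)=\ln\Gamma(x)$ on $(0,1)$, extended $1$-periodically to the circle $\tore=\reel/\ent$, and then to invoke a pointwise convergence theorem. First one notes that $L\in L^1(0,1)$: it is continuous on $(0,1)$, satisfies $L(x)\to0$ as $x\to1^-$, and since $\Gamma(x)\sim1/x$ as $x\to0^+$ the only singularity is the integrable $-\ln x$. Writing the real Fourier series as $\tfrac12a_0+\sum_{n\ge1}\bigl(a_n\cos(2\pi nx)+b_n\sin(2\pi nx)\bigr)$ with $a_n=2\int_0^1L(x)\cos(2\pi nx)\,dx$ and $b_n=2\int_0^1L(x)\sin(2\pi nx)\,dx$, the goal is to prove
\[
a_0=\ln(2\pi),\qquad a_n=\frac1{2n},\qquad b_n=\frac1{\pi n}\bigl(\gamma+\ln(2\pi n)\bigr)\quad(n\ge1),
\]
after which one reassembles using the classical expansions $\sum_{n\ge1}\frac{\cos(2\pi nx)}{n}=-\ln(2\sin\pi x)$ and $\sum_{n\ge1}\frac{\sin(2\pi nx)}{\pi n}=\tfrac12-x$, both valid on $(0,1)$.

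The even coefficients fall out of the reflection formula. Substituting $x\mapsto1-x$ shows $\int_0^1L(x)\cos(2\pi nx)\,dx=\int_0^1L(1-x)\cos(2\pi nx)\,dx$, hence $a_n=\int_0^1\bigl(L(x)+L(1-x)\bigr)\cos(2\pi nx)\,dx=\int_0^1\ln\frac{\pi}{\sin\pi x}\,\cos(2\pi nx)\,dx$ for every $n\ge0$. Inserting $\ln\frac{\pi}{\sin\pi x}=\ln(2\pi)+\sum_{k\ge1}\frac{\cos(2\pi kx)}{k}$ (the previous cosine series) and integrating term by term---legitimate since that series converges in $L^2(0,1)$---orthogonality gives $a_0=\ln(2\pi)$ (Raabe's formula) and $a_n=\frac1{2n}$ for $n\ge1$.

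The odd coefficients are the crux, and here the reflection formula is useless (it only reflects $b_n$ into itself). Instead I would start from Malmsten's representation $\ln\Gamma(x)=\int_0^\infty\bigl[(x-1)e^{-t}+\frac{e^{-xt}-e^{-t}}{1-e^{-t}}\bigr]\frac{dt}{t}$, obtained by integrating Gauss's formula $\psi(x)=\int_0^\infty\bigl(\frac{e^{-t}}{t}-\frac{e^{-xt}}{1-e^{-t}}\bigr)dt$ from $1$ to $x$. Interchanging the two integrations (justified by the crude bound $|\sin(2\pi nx)|\le2\pi nx$ near $x=0$ together with the fact that the bracket is $O(t)$ near $t=0$), the inner integral over $x$ collapses: using $\int_0^1e^{-xt}\sin(2\pi nx)\,dx=\frac{2\pi n(1-e^{-t})}{t^2+4\pi^2n^2}$, $\int_0^1\sin(2\pi nx)\,dx=0$ and $\int_0^1x\sin(2\pi nx)\,dx=-\frac1{2\pi n}$, it becomes $\frac{2\pi n}{t^2+4\pi^2n^2}-\frac{e^{-t}}{2\pi n}$, so that with $a:=2\pi n$,
\[
\int_0^1L(x)\sin(2\pi nx)\,dx=\frac1a\int_0^\infty\Bigl(\frac1{1+(t/a)^2}-e^{-t}\Bigr)\frac{dt}{t}.
\]
Splitting the bracket as $\bigl(\frac1{1+(t/a)^2}-e^{-t/a}\bigr)+\bigl(e^{-t/a}-e^{-t}\bigr)$ yields $\frac1a(\gamma+\ln a)$: the first part is $\int_0^\infty\bigl(\frac1{1+u^2}-e^{-u}\bigr)\frac{du}{u}=\gamma$ after the scaling $u=t/a$ (a classical evaluation of Euler's constant, e.g.\ via the Mellin transforms $\int_0^\infty\frac{u^{s-1}}{1+u^2}du=\frac{\pi}{2\sin(\pi s/2)}$ and $\int_0^\infty e^{-u}u^{s-1}du=\Gamma(s)$, subtracting and letting $s\to0^+$), while the second part is the Frullani integral $\int_0^\infty\frac{e^{-t/a}-e^{-t}}{t}\,dt=\ln a$. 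Hence $b_n=\frac2a(\gamma+\ln a)=\frac1{\pi n}\bigl(\gamma+\ln(2\pi n)\bigr)$, as wanted.

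It remains to reassemble. Using $\gamma+\ln(2\pi n)=(\gamma+\ln2\pi)+\ln n$, the $N$-th partial sum of the Fourier series of $L$ equals
\[
\tfrac12\ln(2\pi)+\tfrac12\sum_{n=1}^N\frac{\cos(2\pi nx)}{n}+(\gamma+\ln2\pi)\sum_{n=1}^N\frac{\sin(2\pi nx)}{\pi n}+\frac1\pi\sum_{n=1}^N\frac{\ln n}{n}\sin(2\pi nx);
\]
letting $N\to\infty$ with $x\in(0,1)$ fixed, each of the three sums converges (the last by Dirichlet's test) and the limit is precisely the right--hand side of the theorem. On the other hand $L$ is $C^\infty$ on $(0,1)$, hence of bounded variation near every interior point, so the Dirichlet--Jordan test guarantees that this same limit equals $L(x)=\ln\Gamma(x)$; comparing the two descriptions of the limit proves the identity. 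The main obstacle is the odd--coefficient step: getting the integral in closed form, and---more delicately---justifying the interchange of integrals and the term--by--term manipulations in the presence of the logarithmic singularity of $\ln\Gamma$ at the origin.
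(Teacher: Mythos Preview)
The paper does not prove Theorem~\ref{thKum}; it merely states Kummer's expansion, attributes it to Kummer (1847), and refers the reader to \cite[Section~1.7]{and} for a proof. So there is no in-paper argument to compare against.

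Your argument is correct and is essentially the classical derivation: compute the real Fourier coefficients of $x\mapsto\ln\Gamma(x)$ on $(0,1)$ directly, using the reflection formula for the cosine coefficients and an integral representation (here Malmsten's) for the sine coefficients, then invoke a pointwise convergence criterion. All the individual computations check out: the reflection step gives $a_0=\ln(2\pi)$ and $a_n=\frac{1}{2n}$; the Fubini interchange is indeed legitimate once you use $|\sin(2\pi nx)|\le 2\pi n x$ to tame the region $\{x\text{ small},\,t\text{ large}\}$; the inner $x$-integrals are as stated; and the splitting into the $\gamma$-integral plus a Frullani integral yields $b_n=\frac{1}{\pi n}(\gamma+\ln(2\pi n))$. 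The reassembly via $\sum\frac{\cos(2\pi nx)}{n}=-\ln(2\sin\pi x)$ and $\sum\frac{\sin(2\pi nx)}{\pi n}=\frac12-x$ then gives exactly the claimed identity, and Dirichlet--Jordan (or Dini) at interior points finishes the job. This is in the same spirit as the textbook proofs the paper cites (e.g.\ Andrews--Askey--Roy), although there the sine coefficients are typically extracted from the Hurwitz--Lerch machinery rather than from Malmsten's formula; your route via Malmsten plus Frullani is slightly more elementary in that it avoids any analytic continuation in an $s$-variable beyond the single limit $s\to0^+$ used to identify $\gamma$.
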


\section{\sc The reflection formula for the first order generalized Stieltjes constants}\label{sec1}

As we explained in the introduction, this formula relates the first order generalized Stieltjes constant $\gamma_1(a)$ to its reflected value $\gamma_1(1-a)$ for rational $a$. The presented proof is different from that
of Almkvist and Meurman, and has the advantage of being elementary in the sense that it does not make use of the
functional equation of the Hurwitz zeta function.

\begin{proposition}\label{pr0}
 for positive integers $p$ and $q$ with $ p< q$, we have
\begin{equation*} \label{pr2}
\sideset{}{'}\sum_{n\in\ent} 
\frac{\ln\abs{n+\frac{p}{q}}}{n+\frac{p}{q}} 
 =-\pi \ln(2\pi q e^\gamma) \cot\left(\frac{p\pi}{q}\right) +2\pi\sum_{j=1}^{q-1}
\sin\left(
\frac{2\pi j p}{q}
\right)
\ln\Gamma\left(\frac{ j}{q}\right).
\end{equation*}
where the primed sum denotes the ``principal value'', defined as follows:
\[
\sideset{}{'}\sum_{n\in\ent}a_n
=\lim_{n\to\infty}\left(\sum_{k=-n}^{ n}a_k\right).
\]
\end{proposition}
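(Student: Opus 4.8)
The plan is to feed Kummer's expansion (Theorem~\ref{thKum}) into a root-of-unity filter that isolates the contribution of a single arithmetic progression inside the sine series $\sum_{k\ge1}\frac{\ln k}{k}\sin(2\pi kx)$; throughout, sums over $\ent$ are principal values. \textbf{Step 1: reduction to a progression through the origin.} Writing $n+\frac pq=\frac{nq+p}{q}$ and splitting the logarithm term by term,
\[
\sideset{}{'}\sum_{n\in\ent}\frac{\ln\abs{n+\frac pq}}{n+\frac pq}
= q\sideset{}{'}\sum_{m\equiv p\,(q)}\frac{\ln\abs{m}}{m}
\;-\;q\ln q\,\sideset{}{'}\sum_{m\equiv p\,(q)}\frac{1}{m},
\]
the passage to the limit being harmless because $\frac{\ln\abs m}{m}\to0$, so the symmetric truncations in $n$ and in $m$ differ by $o(1)$. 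By the partial-fraction expansion $\pi\cot(\pi z)=\sideset{}{'}\sum_{n\in\ent}\frac{1}{z+n}$, the last sum equals $\frac{\pi}{q}\cot\!\left(\frac{p\pi}{q}\right)$, producing the term $-\pi\ln q\,\cot\!\left(\frac{p\pi}{q}\right)$.

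\textbf{Step 2: filtering and Kummer.} Using the exact finite identity $\mathbf{1}_{\{m\equiv p\,(q)\}}=\frac1q\sum_{j=0}^{q-1}e^{2\pi ij(m-p)/q}$ and moving the finite $j$-sum outside the limit,
\[
\sideset{}{'}\sum_{m\equiv p\,(q)}\frac{\ln\abs m}{m}
=\frac1q\sum_{j=0}^{q-1}e^{-2\pi ijp/q}\,\sideset{}{'}\sum_{m\ne0}\frac{\ln\abs m}{m}\,e^{2\pi ijm/q}.
\]
Since $m\mapsto\frac{\ln\abs m}{m}$ is odd, the inner principal value collapses to $2i\sum_{m\ge1}\frac{\ln m}{m}\sin\!\left(\frac{2\pi jm}{q}\right)$, which converges by Dirichlet's test; it vanishes for $j=0$, and for $1\le j\le q-1$ Kummer's theorem at $x=j/q\in(0,1)$ evaluates it. Taking real parts (the left-hand side being real),
\[
\sideset{}{'}\sum_{m\equiv p\,(q)}\frac{\ln\abs m}{m}
=\frac{2\pi}{q}\sum_{j=1}^{q-1}\sin\!\left(\frac{2\pi jp}{q}\right)
\left(\ln\frac{\Gamma(j/q)}{\sqrt{2\pi}}+\frac{\ln(2\sin(\pi j/q))}{2}-(\gamma+\ln(2\pi))\!\left(\frac12-\frac jq\right)\right).
\]

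\textbf{Step 3: simplification.} Three elementary facts finish the computation: (i) $\sum_{j=1}^{q-1}\sin\!\left(\frac{2\pi jp}{q}\right)=0$ (the imaginary part of a vanishing geometric sum of roots of unity), which removes the $\ln\sqrt{2\pi}$, the $\tfrac12\ln2$, and the $\tfrac12$ contributions; (ii) under $j\leftrightarrow q-j$ the factor $\sin\!\left(\frac{2\pi jp}{q}\right)$ changes sign while $\sin(\pi j/q)$ is fixed, so $\sum_{j=1}^{q-1}\sin\!\left(\frac{2\pi jp}{q}\right)\ln(2\sin(\pi j/q))=0$; and (iii) $\sum_{j=1}^{q-1}j\,\sin\!\left(\frac{2\pi jp}{q}\right)=-\frac q2\cot\!\left(\frac{p\pi}{q}\right)$, from the closed form of $\sum_{j=0}^{q-1}jz^j$ at $z=e^{2\pi ip/q}$. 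Using (i)--(iii), multiplying the previous display by $q$, adding $-\pi\ln q\,\cot\!\left(\frac{p\pi}{q}\right)$ from Step~1, and writing $\gamma+\ln(2\pi)+\ln q=\ln(2\pi qe^{\gamma})$, one obtains exactly
\[
\sideset{}{'}\sum_{n\in\ent}\frac{\ln\abs{n+\frac pq}}{n+\frac pq}
=-\pi\ln(2\pi qe^{\gamma})\cot\!\left(\frac{p\pi}{q}\right)+2\pi\sum_{j=1}^{q-1}\sin\!\left(\frac{2\pi jp}{q}\right)\ln\Gamma\!\left(\frac jq\right).
\]

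The main obstacle is analytic rather than algebraic: one must rigorously justify interchanging the finite filter sum with the principal-value limit, verify the Dirichlet convergence of $\sum_{m\ge1}\frac{\ln m}{m}\sin(2\pi jm/q)$ for $q\nmid j$, and confirm that the principal value over $n$ matches the one over the progression $m\equiv p\,(q)$ — all of which rest on the single fact $\frac{\ln\abs m}{m}\to0$. Once these are secured, identities (i)--(iii) and the final bookkeeping are routine.
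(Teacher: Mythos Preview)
Your proof is correct and follows essentially the same route as the paper: both feed Kummer's expansion at $x=j/q$ through a root-of-unity filter, invoke the identical three simplifications (your (i)--(iii) are exactly the paper's $B_{p,q}=0$, $C_{p,q}=0$, $D_{p,q}=-\tfrac12\cot(p\pi/q)$), and use the cotangent partial-fraction expansion to absorb the $\ln q$. The only cosmetic differences are that you split off the $\ln q$ contribution first rather than last and use the complex-exponential form of the filter before taking real parts, whereas the paper multiplies Kummer by $\sin(2\pi jp/q)$ directly.
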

\begin{proof}

The statement of Theorem \ref{thKum} is written as

\begin{equation*}\label{EID1}
 \sum_{k=1}^\infty
\frac{\ln k}{k}\sin(2\pi k x)=-
\frac{\pi}{2}\ln \pi+
\frac{\pi}{2}\ln\sin(\pi x)
+\pi \ln(2\pi e^\gamma)\left(x-\frac{1}{2}\right)+\pi\ln\Gamma(x).\tag{1}
\end{equation*}
Now, consider a positive integer $q$ with $q\geq 2$. For $j\in\{1,2,\ldots,q-1\}$ we have

\begin{equation*}\label{EID2}
 \sum_{k=1}^\infty
\frac{\ln k}{k}\sin\left(\frac{2\pi k j}{q}\right)=
- \frac{\pi}{2}\ln \pi+
\frac{\pi}{2}\ln\sin\left(\frac{\pi j}{q}\right)
+\pi \ln(2\pi e^\gamma)\left(\frac{j}{q}-\frac{1}{2}\right)+\pi\ln\Gamma
\left(\frac{ j}{q}\right).\tag{2}
\end{equation*}
Multiply  both sides of \eqref{EID2} by $\sin\left(
\frac{2\pi j p}{q}
\right)$, where $p$ is some integer from $\{1,\ldots,q-1\}$,
and add  the resulting equalities for $j=1,\ldots,q-1$, to obtain
\begin{equation*}\label{EID3}
 \sum_{k=1}^\infty
\frac{\ln k}{k}A_{p,q}(k)=
- \frac{\pi\ln \pi}{2} B_{p,q}+
\frac{\pi}{2}C_{p,q}
+\pi \ln(2\pi e^\gamma)D_{p,q}+\pi\sum_{j=1}^{q-1}
\sin\left(
\frac{2\pi j p}{q}
\right)
\ln\Gamma\left(\frac{ j}{q}\right),\tag{3}
\end{equation*}
where
\begin{align*}
A_{p,q}(k)&=\sum_{j=1}^{q-1}\sin\left(
\frac{2\pi j p}{q}
\right)\sin\left(\frac{2\pi jk}{q}\right)\\
B_{p,q}&=\sum_{j=1}^{q-1}\sin\left(\frac{2\pi j p}{q}\right)\\
C_{p,q}&=\sum_{j=1}^{q-1}\sin\left(\frac{2\pi j p}{q}\right)
\ln\sin\left(\frac{\pi j}{q}\right)\\
D_{p,q}&=\sum_{j=1}^{q-1}\left(\frac{j}{q}-\frac{1}{2}\right)\sin\left(\frac{2\pi j p}{q}\right).
\end{align*}

These sums are now simplified. Let $\omega_q=\exp\left(\frac{2\pi i}{q}\right)$, and use
$\sum_{j=0}^{q-1}\omega_q^{nj}=q\chi_q(n)$ where $\chi_q(n)=1 $ if $n\equiv0\mod q$ and $\chi_q(n)=0 $ otherwise. The imaginary part  of the identity gives \begin{equation*}\label{EID4}
B_{p,q}=0.\tag{4}
\end{equation*}
Also,
\begin{align*}
A_{p,q}(k)&=\frac{1}{2}
\sum_{j=1}^{q-1}\left(\cos\left(
\frac{2\pi j (p-k)}{q}
\right)-\cos\left(\frac{2\pi j(p+k)}{q}\right)\right)\\
&=\frac{1}{2}\Re\left(\sum_{j=0}^{q-1}\omega^{(p-k)j}
-\sum_{j=0}^{q-1}\omega^{(p+k)j}\right)=
\frac{q}{2}(\chi_q(p-k)-\chi_q(p+k)).
\end{align*}
That is
\begin{equation*}\label{EID5}
A_{p,q}(k)=\begin{cases}
\phantom{-}\frac{q}{2}&\text{if $k\equiv p\mod q$},\\
-\frac{q}{2}&\text{if $k\equiv -p\mod q$}.
\end{cases}\tag{5}
\end{equation*}
On the other hand, the change of summation index
$j\leftarrow q-j$ in the formula for $C_{p,q}$ shows that
\[
C_{p,q}
=\sum_{j=1}^{q-1}\sin\left(2\pi p-\frac{2\pi j p}{q}\right)
\ln\sin\left(\pi-\frac{\pi j}{q}\right)=-C_{p,q}.
\]
Thus,
\begin{equation*}\label{EID6}
C_{p,q}=0.\tag{6}
\end{equation*}
Finally, use \eqref{EID4} to obtain
\[
D_{p,q}=\frac{1}{q}\sum_{j=1}^{q-1}j\sin\left(\frac{2\pi j p}{q}\right).
\]
Now for, $0<\theta<\pi$, we have
\begin{align*}
1+2\sum_{j=1}^{q-1}\cos(2j\theta)&=\sum_{j=1-q}^{q-1}e^{2ij\theta}=\frac{e^{2iq\theta}-e^{2i(1-q)\theta}}{e^{2i\theta}-1}\\
&=\frac{\sin((2q-1)\theta)}{\sin\theta}
=\sin(2q\theta)\cot\theta-\cos(2q\theta).
\end{align*}
Taking the derivative with respect to $\theta$ and substituting
$\theta=\pi p/q$ we get
\begin{equation*}\label{EID7}
D_{p,q}=-\frac{1}{2}
 \cot\left(\frac{p\pi}{q}\right).\tag{7}
 \end{equation*}
Replacing \eqref{EID4},\eqref{EID5},\eqref{EID6} and
\eqref{EID7} in \eqref{EID3} we obtain
\begin{align*}\label{EID8}
\sum_{k=0}^\infty\left(
\frac{\ln (qk+p)}{k+p/q}-\frac{\ln (qk+q-p)}{k +1-p/q}\right)
&=-\pi \ln(2\pi e^\gamma) \cot\left(\frac{p\pi}{q}\right)\\
&\phantom{=}{}+2\pi\sum_{j=1}^{q-1}
\sin\left(
\frac{2\pi j p}{q}
\right)
\ln\Gamma\left(\frac{ j}{q}\right).\tag{8}
\end{align*}
The final step is to use the well-known cotangent partial fraction expansion:
\begin{align*}\label{EID9}
\sum_{k=0}^\infty\left(
\frac{1}{k+p/q}-\frac{1}{k +1-p/q}\right)
&=\lim_{N\to\infty}\sum_{k=-N}^N\frac{1}{k+p/q}
=\frac{q}{p}-\sum_{k=1}^N\frac{2p/q}{k^2-(p/q)^2}\\
&=\frac{q}{p}+\sum_{k=1}^\infty\frac{2p/q}{(p/q)^2-k^2}
=\pi\cot\left(\frac{ p\pi}{q}\right).\tag{9}
\end{align*}\nobreak
Thus,
subtracting $(\ln q)$ times \eqref{EID9}  from \eqref{EID8} we obtain the desired conclusion.
\end{proof}

\goodbreak
\noindent\textbf{Examples.} Taking $p=1$ and $q\in\{3, 4\}$ we obtain
\begin{align*}
\sideset{}{'}\sum_{n\in\ent}
\frac{\ln\abs{n+\frac{1}{3}}}{n+\frac{1}{3}} &=
\frac{\pi}{2\sqrt{3}}\ln\left(\frac{3\Gamma^{12}(\frac{1}{3})}{2^8\,\pi^8e^{2\gamma}}\right).\\
\sideset{}{'}\sum_{n\in\ent}
\frac{\ln\abs{n+\frac{1}{4}}}{n+\frac{1}{4}} &=
\pi\ln\left(\frac{\Gamma^{4}(\frac{1}{4})}{2^4\pi^3e^{\gamma}}\right).
\end{align*}

\section{\sc The Evaluation of some integrals involving the 
log-log function}\label{sec2}

In this section we use Theorem \ref{thKum}, to evaluate some difficult integrals.

\begin{proposition}\label{pr3}
For $0<x<1$, we have:
\begin{equation*}
 \int_0^1\frac{\ln \ln(1/u)}{u^2-2(\cos 2\pi x)u+1 }\,du=
 \frac{\pi}{2\sin(2\pi x)}\left((1-2x)\ln(2\pi)+ \ln\left(\frac{ \Gamma(1-x)}{ \Gamma(x)}\right)\right).
\end{equation*}
And, taking the limit as $x$ tend to $1/2$, we obtain
\begin{equation*}
 \int_0^1\frac{\ln \ln(1/u)}{(u+1)^2}\,du=\ln\sqrt{2\pi}+\frac{\Gamma'(1/2)}{2\Gamma(1/2)}
 =\ln\sqrt{\frac{\pi}{2}}-\frac{\gamma}{2}.
\end{equation*}
\end{proposition}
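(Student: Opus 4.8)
The plan is to reduce the integral to a combination of trigonometric series that Theorem \ref{thKum} evaluates in closed form. First I would make the substitution $u=e^{-t}$, which turns the left-hand side into
\[
\int_0^\infty\frac{e^{-t}\ln t}{1-2(\cos 2\pi x)e^{-t}+e^{-2t}}\,dt.
\]
The denominator is handled by the classical expansion $\frac{r\sin\theta}{1-2r\cos\theta+r^2}=\sum_{n\ge1}r^n\sin(n\theta)$, valid for $0<r<1$; with $r=e^{-t}$ and $\theta=2\pi x$ (and $x\ne1/2$, so that $\sin2\pi x\ne0$) this rewrites the kernel as $\frac{1}{\sin2\pi x}\sum_{n\ge1}e^{-nt}\sin(2\pi nx)$. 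Combined with the elementary evaluation $\int_0^\infty e^{-nt}\ln t\,dt=-\frac{\gamma+\ln n}{n}$ (substitute $s=nt$ and use $\Gamma'(1)=-\gamma$), this points to the identity
\[
\sin(2\pi x)\int_0^1\frac{\ln\ln(1/u)}{u^2-2(\cos2\pi x)u+1}\,du=-\gamma\sum_{n\ge1}\frac{\sin2\pi nx}{n}-\sum_{n\ge1}\frac{\ln n}{n}\sin2\pi nx.
\]

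The interchange of summation and integration is the one genuinely delicate point, since $\sum_n e^{-nt}\sim 1/t$ near $t=0$ and $|\ln t|/t$ is not integrable there, so term-by-term \emph{absolute} convergence fails. I would get around this by working with partial sums: writing $S_N(t)=\sum_{n=1}^N e^{-nt}\sin(2\pi nx)=\Im\big(\frac{z-z^{N+1}}{1-z}\big)$ with $z=e^{-t+2\pi ix}$, one has $|S_N(t)|\le\frac{2}{|1-z|}$, which stays bounded as $t\to0^+$ (its limit is $1/\sin\pi x$), while for $t\ge1$ the cruder bound $|S_N(t)|\le\sum_{n\ge1}e^{-nt}\le\frac{e^{-t}}{1-e^{-1}}$ applies. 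Thus the $S_N$ are dominated by a single function that is integrable against $|\ln t|$ on $(0,\infty)$, and dominated convergence gives $\int_0^\infty\ln t\,S_N(t)\,dt\to\int_0^\infty\ln t\,g(t)\,dt$ with $g(t)=\sum_{n\ge1}e^{-nt}\sin(2\pi nx)=\sin(2\pi x)\,e^{-t}/(1-2(\cos2\pi x)e^{-t}+e^{-2t})$; since the left side equals the finite sum $-\sum_{n=1}^N\frac{\gamma+\ln n}{n}\sin2\pi nx$, the displayed identity follows.

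It then remains to insert the two known Fourier series: $\sum_{n\ge1}\frac{\sin2\pi nx}{n}=\pi(\frac12-x)$ for $0<x<1$, and the rearranged form of Theorem \ref{thKum},
\[
\sum_{n\ge1}\frac{\ln n}{n}\sin2\pi nx=-\tfrac{\pi}{2}\ln\pi+\tfrac{\pi}{2}\ln\sin\pi x+\pi\ln(2\pi e^\gamma)\big(x-\tfrac12\big)+\pi\ln\Gamma(x).
\]
Substituting and using $\ln(2\pi e^\gamma)=\ln2\pi+\gamma$ to cancel every occurrence of $\gamma$, the right-hand side collapses to $\pi\big[(\tfrac12-x)\ln2\pi+\tfrac12\ln\pi-\tfrac12\ln\sin\pi x-\ln\Gamma(x)\big]$; Euler's reflection formula $\Gamma(x)\Gamma(1-x)=\pi/\sin\pi x$ turns $\tfrac12\ln\pi-\tfrac12\ln\sin\pi x-\ln\Gamma(x)$ into $\tfrac12\ln\Gamma(1-x)-\tfrac12\ln\Gamma(x)$, which is exactly the asserted closed form after dividing by $\sin2\pi x$. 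For the second assertion I would first note that $x\mapsto\int_0^1\frac{\ln\ln(1/u)}{u^2-2(\cos2\pi x)u+1}\,du$ is continuous at $x=1/2$ by dominated convergence (near $x=1/2$ the kernel is jointly continuous in $(x,u)$ and bounded on $[0,1]$, since the denominator is close to $(u+1)^2\ge1$), and then compute the limit of the closed form: with $x=\tfrac12+\eps$ the numerator and $\sin2\pi x=-\sin2\pi\eps$ both vanish to first order, so a Taylor expansion gives the value $\tfrac12\ln2\pi+\tfrac12\psi(1/2)=\ln\sqrt{2\pi}+\frac{\Gamma'(1/2)}{2\Gamma(1/2)}$, and the last equality in the statement is just $\psi(1/2)=-\gamma-2\ln2$. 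The main obstacle is the convergence bookkeeping of the second paragraph; the rest is routine algebra with classical identities.
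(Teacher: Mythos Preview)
Your proof is correct and follows essentially the same route as the paper: both reduce the integral (after $u=e^{-t}$) to the sum $\sum_{n\ge1}\frac{\ln n}{n}\sin(2\pi nx)$ and then invoke Kummer's expansion. The only differences are in packaging---the paper reaches that sum by introducing an auxiliary parameter $s$ in $\sum_n n^{-s}\sin(2\pi nx)$ and differentiating at $s=1$ rather than by your direct partial-sum domination argument, and it obtains $\Gamma(1-x)/\Gamma(x)$ by subtracting Kummer's formula at $x$ and $1-x$ rather than applying Kummer at $x$ together with Euler's reflection formula as you do.
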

\begin{proof}
Indeed, subtracting the corresponding Kummer's Formulas, for $
\ln \Gamma(x)$ and $\ln \Gamma(1-x)$ we see that, for $0<x<1$ we have
\begin{equation*}
 \ln\left({\frac{\Gamma(x)}{\Gamma(1-x)}}\right)
=(\gamma+\ln(2\pi))\left(
1-2x\right)+\frac{2}{\pi}\sum_{k=1}^\infty
\frac{\ln k}{k}\sin(2\pi k x),\tag{10}
\end{equation*}
or equivalently,
\begin{equation*}\label{EID11}
 \ln\left({\frac{(2\pi)^{x}\Gamma(x)}{(2\pi)^{1-x}\Gamma(1-x)}}\right)
= \gamma \left(
1-2x\right)+\frac{2}{\pi}\sum_{k=1}^\infty
\frac{\ln k}{k}\sin(2\pi k x).\tag{11}
\end{equation*}
Now, using the fact that for $\Re{s>0}$ and $k\geq 1$ we have $\frac{\Gamma(s)}{k^s}=\int_0^\infty t^{s-1}e^{-kt}\,dt$, we conclude that
for $s>0$, and $0<x<1$,  we have
\begin{align*}
\sum_{k=1}^\infty\frac{e^{2\pi i k x}}{k^s}&=
\frac{1}{\Gamma(s)}\sum_{k=1}^\infty\left(\int_0^\infty t^{s-1} e^{-k t} e^{2\pi i k x}dt\right)\\
&=\frac{1}{\Gamma(s)}\int_0^\infty\frac{e^{-t+2\pi i x}}{1-e^{-t+2\pi i x}}\,t^{s-1}\,dt.
\end{align*}
Restricting our attention to the imaginary parts we get

\begin{equation*}
\sum_{k=1}^\infty\frac{\sin({2\pi k x})}{k^s}
=\frac{\sin(2\pi x)}{\Gamma(s)}
\int_0^\infty\frac{t^{s-1}}{e^t+e^{-t}-2\cos(2\pi x)}\,dt.\tag{12}
\end{equation*}
Now, taking the derivative with respect to $s$ at $s=1$ we obtain, for $0<x<1$,
the following:
\begin{align*}
\sum_{k=1}^\infty\frac{\ln k}{k}\sin({2\pi k x})
&= \frac{\Gamma'(1)}{\Gamma^2(1)}
\int_0^\infty\frac{\sin(2\pi x)}{e^{t}+e^{-t}-2\cos(2\pi x) } \, dt\\
&\quad{}- \frac{\sin(2\pi x)}{\Gamma(1)}
\int_0^\infty\frac{\ln t}{e^t+e^{-t}-2\cos(2\pi x)}\,dt.\tag{13}
\end{align*}
Taking into account the facts  $\Gamma'(1)=-\gamma$, $\Gamma(1)=1$, and 
\[
\int_0^\infty\frac{\sin(2\pi x)}{e^t+e^{-t}-2\cos(2\pi x)}\,dt=\pi\left(\frac{1}{2}-x\right),\quad
\text{for $0<x<1$}
\]
we conclude that
\begin{equation*}
\sum_{k=1}^\infty\frac{\ln k}{k }\sin({2\pi k x})
=-\frac{\gamma\pi }{2}(1-2x )
- \sin(2\pi x)
\int_0^\infty\frac{\ln t}{e^t+e^{-t}-2\cos(2\pi x)}\,dt.\tag{14}
\end{equation*}
The change of variables $t= \ln(1/u)$ yields:

\begin{equation*}\label{EID6x}
\frac{2}{\pi}\sum_{k=1}^\infty\frac{\ln k}{k }\sin({2\pi k x})
+\gamma(1-2x )=
- \frac{2\sin(2\pi x)}{\pi}
\int_0^1\frac{\ln \ln(1/u)}{u^2-2(\cos 2\pi x)u+ 1}\,du.\tag{15}
\end{equation*}
Finally, combining \eqref{EID11} and \eqref{EID6x}  we obtain the desired result. Concerning the limit as $x$ tend to $1/2$, we use the well-known fact that $\frac{\Gamma'(1/2)}{\Gamma(1/2)}=\psi(1/2)=-\gamma-2\ln 2$, (see \cite[6.3.3]{abr}).
\end{proof}

\noindent\textbf{Examples.} Taking $x=1/3$, $x=1/4$ and $x=1/6$ we obtain
\begin{align*} 
 \int_0^1\frac{\ln \ln(1/u)}{u^2+u+1 }\,du&=
 \frac{-\pi}{6\sqrt{3}} \ln\left(
\frac{ 3^3}{4^4\pi^8}\,\Gamma^{12}\left(\frac{1}{3}\right)\right).\label{Ex16}\tag{16}\\
 \int_0^1\frac{\ln \ln(1/u)}{u^2+1 }\,du&=\frac{-\pi}{4} \ln\left(
 \frac{1}{4\pi^3}\,\Gamma^4\left (\frac{1}{4}\right)\right).\label{Ex17}\tag{17}\\
 \int_0^1\frac{\ln \ln(1/u)}{u^2-u+1 }\,du&=\frac{-\pi}{3\sqrt{3}} \ln\left(
\frac{1}{(2\pi)^{5}}\,\Gamma^6 \left(\frac{1}{6}\right)\right),\\
&=\frac{-\pi}{3\sqrt{3}} \ln\left(
\frac{3^3}{2^7\pi^{8}}\,\Gamma^{12} \left(\frac{1}{3}\right)\right).\label{Ex18}\tag{18}
\end{align*}
where we used freely the  duplication, and the reflection formulas for the gamma function \cite[6.1.17 and 6.1.18]{abr}. In particular, we used $\Gamma(\frac{1}{6})=\frac{\sqrt{3/\pi}}{\sqrt[3]{2}}\Gamma^2(\frac{1}{3})$ that follows readily from these formulas.

\bigskip
The second degree polynomial in the integrand's denominator in Proposition \ref{pr3} has negative discriminant. In the next proposition the corresponding denominator has real roots outside the interval $[0,1]$. This case seems to be new to the best knowledge of the author.

\begin{proposition}
Let $A_\Gamma:\reel\vers\reel$ be the function defined by
\[
A_\Gamma(y)=-\frac{\ln(2\pi)}{2}\,y+\frac{\sinh(\pi y)}{\pi} \int_0^1\frac{\ln \ln(1/u)}{u^2+2\cosh (\pi y)\,u+1 }\,du.
\]
Then, for $y\in\reel$ we have
\[
\Gamma\left(\frac{1+iy}{2}\right)=\sqrt{\frac{\pi}{\cosh(\pi y/2)}}
\, e^{iA_\Gamma(y)}.
\]
\end{proposition}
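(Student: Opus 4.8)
The plan is to re-use the identity underlying Proposition \ref{pr3} and to push it off the real axis by analytic continuation. Rewriting the conclusion of Proposition \ref{pr3}, for every real $x\in(0,1)$ one has
\[
\ln\frac{\Gamma(x)}{\Gamma(1-x)}=(1-2x)\ln(2\pi)-\frac{2\sin(2\pi x)}{\pi}\int_0^1\frac{\ln\ln(1/u)}{u^2-2(\cos 2\pi x)\,u+1}\,du.
\]
I would then argue that both sides extend to single-valued holomorphic functions on the vertical strip $S=\{x\in\comp:0<\Re x<1\}$. On the left this is because $\Gamma$ has no zeros and its only poles lie at $0,-1,-2,\dots$, so $x\mapsto\Gamma(x)/\Gamma(1-x)$ is holomorphic and nowhere zero on the simply connected set $S$ and hence admits a holomorphic logarithm there, which we normalize to agree with the real logarithm on $(0,1)$. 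On the right it is because $u^2-2(\cos2\pi x)u+1=(u-e^{2\pi ix})(u-e^{-2\pi ix})$ has no root in $[0,1]$ when $\Re x\in(0,1)$ (for such $x$ the numbers $e^{\pm2\pi ix}$ are either non-real or negative), so the integral converges absolutely and locally uniformly and defines a holomorphic function of $x$ on $S$ by a standard Morera argument. By the identity theorem the displayed equality then holds throughout $S$.

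Next I specialize to $x=\tfrac{1+iy}{2}\in S$, with $y\in\reel$. Elementary identities give $1-2x=-iy$, $\cos(2\pi x)=\cos(\pi+i\pi y)=-\cosh(\pi y)$, and $\sin(2\pi x)=\sin(\pi+i\pi y)=-i\sinh(\pi y)$, so the quadratic becomes $u^2+2\cosh(\pi y)u+1$ and the identity turns into
\[
\ln\frac{\Gamma\!\left(\tfrac{1+iy}{2}\right)}{\Gamma\!\left(\tfrac{1-iy}{2}\right)}=-iy\ln(2\pi)+\frac{2i\sinh(\pi y)}{\pi}\int_0^1\frac{\ln\ln(1/u)}{u^2+2\cosh(\pi y)u+1}\,du=2i\,A_\Gamma(y).
\]
Since $\tfrac{1-iy}{2}=\overline{\tfrac{1+iy}{2}}$ and $\Gamma(\overline z)=\overline{\Gamma(z)}$ (Schwarz symmetry), exponentiating yields $\Gamma(z)/\overline{\Gamma(z)}=e^{2iA_\Gamma(y)}$ with $z=\tfrac{1+iy}{2}$; because $A_\Gamma(y)$ is manifestly real (it is a real multiple of $y$ plus a real multiple of a real integral), this is exactly the assertion $\arg\Gamma(z)\equiv A_\Gamma(y)\pmod{\pi}$.

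For the modulus I would invoke the reflection formula $\Gamma(z)\Gamma(1-z)=\pi/\sin(\pi z)$: with $1-z=\overline z$ and $\sin(\pi z)=\sin\!\big(\tfrac\pi2+\tfrac{i\pi y}{2}\big)=\cosh(\tfrac{\pi y}{2})$ this gives $\abs{\Gamma(z)}^2=\pi/\cosh(\pi y/2)$. Combining the two pieces, $\Gamma(z)=\sigma(y)\,\sqrt{\pi/\cosh(\pi y/2)}\;e^{iA_\Gamma(y)}$ for some $\sigma(y)\in\{+1,-1\}$. Both $y\mapsto\Gamma\!\big(\tfrac{1+iy}{2}\big)$ and $y\mapsto\sqrt{\pi/\cosh(\pi y/2)}\,e^{iA_\Gamma(y)}$ are continuous and non-vanishing on $\reel$, so $\sigma$ is a continuous $\{\pm1\}$-valued function, hence constant; and $\sigma(0)=1$ since $\Gamma(\tfrac12)=\sqrt\pi$ and $A_\Gamma(0)=0$. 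This establishes the claimed identity.

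I expect the analytic-continuation step to be the only genuinely delicate point: one must check carefully that the quadratic $u^2-2(\cos2\pi x)u+1$ stays zero-free for $u\in[0,1]$ for every $x$ in the strip $S$ (so that the integral remains finite and holomorphic in $x$), and that the logarithm appearing on the left is indeed the continuation from its real values on $(0,1)$. Everything after that is routine bookkeeping with trigonometric and hyperbolic identities together with the classical reflection and Schwarz-symmetry properties of $\Gamma$.
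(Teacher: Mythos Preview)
Your proposal is correct and follows essentially the same route as the paper: rewrite Proposition~\ref{pr3}, analytically continue it off the real interval, specialize to $x=\tfrac{1+iy}{2}$, read off the ratio $\Gamma(\tfrac{1+iy}{2})/\Gamma(\tfrac{1-iy}{2})=e^{2iA_\Gamma(y)}$, use the reflection formula for the modulus, and pin down the sign by continuity and the value at $y=0$. The only cosmetic difference is that the paper first substitutes $x=\tfrac{t+1}{2}$ and continues in $t$ (setting $t=iy$), whereas you continue directly in $x$; your version is also more explicit in verifying that the quadratic $u^2-2(\cos2\pi x)u+1$ stays zero-free on $[0,1]$ throughout the strip, which the paper leaves implicit.
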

\begin{proof}
Let us rephrase Proposition \ref{pr3}, by taking $x=\frac{t+1}{2}$ in order to give more symmetric aspect to the formula there:

\begin{equation*}
\forall\,t\in(-1,1),\quad \int_0^1\frac{\ln \ln(1/u)}{u^2+2\cos (\pi t)\,u+1 }\,du=
 \frac{ \pi}{2\sin(\pi t)}\left( \ln(2\pi)\,t+ \ln\left(\frac{ \Gamma(\frac{1+t}{2})}{ \Gamma(\frac{1-t}{2})}\right)\right),
\end{equation*}
or equivalently, for $-1<t<1$, we have
\begin{equation*}
\exp\left(-\ln(2\pi) t+\frac{2\sin(\pi t)}{\pi} \int_0^1\frac{\ln \ln(1/u)}{u^2+2\cos (\pi t)\,u+1 }\,du\right)=
 \frac{ \Gamma(\frac{1+t}{2})}{ \Gamma(\frac{1-t}{2})}.
\end{equation*}
Using analytic continuation we deduce that, for $-1<\Re z<1$ we have also
\begin{equation*}
\exp\left(-\ln(2\pi) z+\frac{2\sin(\pi z)}{\pi} \int_0^1\frac{\ln \ln(1/u)}{u^2+2\cos (\pi z)\,u+1 }\,du\right)= 
 \frac{ \Gamma(\frac{1+z}{2})}{ \Gamma(\frac{1-z}{2})}.
\end{equation*}
In particular, setting $z=iy$  with $y\in\reel$, we obtain
\begin{equation*}
e^{2iA_\Gamma(y)}= \frac{ \Gamma(\frac{1+iy}{2})}{ \Gamma(\frac{1-iy}{2})}.
\end{equation*}
But, by  Euler's reflection formula \cite[6.1.17]{abr} we know that 
\[
\abs{\Gamma\left(\frac{1+iy}{2}\right)}^2=
\Gamma\left(\frac{1+iy}{2}\right)
\overline{\Gamma\left(\frac{1+iy}{2}\right)}=
\Gamma\left(\frac{1+iy}{2}\right)
\Gamma\left(\frac{1-iy}{2}\right)=
\frac{\pi}{\cosh(\pi y/2)},
\]
therefore, the square of the continuous function: 
\[
y\mapsto \sqrt{\dfrac{\cosh(\pi y/2)}{\pi}}\Gamma\left(\dfrac{1+iy}{2}\right)e^{-iA_\Gamma(y)}
\]
is equal to $1$ for every $y\in\reel$, hence, it must be constant and consequently identical to $1$ which is its value for $y=0$.
\end{proof}
\begin{corollary}
Let the principal determination of the argument of a nonzero complex number $z$ be denoted by $\Arg$, and
let $\alpha$ be defined by the formula
\[
\alpha=\inf\left\{y>0:\Gamma\left(\dfrac{1+iy}{2}\right)=-\sqrt{\frac{\pi}{\cosh(\pi y/2)}} \right\}.
\]
Then, for every $y\in(-\alpha,\alpha)$ we have
\[
\int_0^1\frac{\ln \ln(1/u)}{u^2+2\cosh (\pi y)\,u+1 }\,du
=\ln\sqrt{2\pi}\cdot\frac{\pi y}{\sinh(\pi y)}+\frac{\pi}{\sinh(\pi y)}\Arg \Gamma\left(\frac{1+iy}{2}\right).
\]
Moreover, using \textsc{Mathematica}${}^{\text{\SMALL\textregistered}}$
Software, we readily obtain
$\alpha\approx 10.106\,689\,535\,698$.
\end{corollary}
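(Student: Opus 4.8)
The plan is to identify $A_\Gamma(y)$, on the interval $(-\alpha,\alpha)$, with the principal argument $\Arg\Gamma\!\left(\tfrac{1+iy}{2}\right)$; once this is done the claimed formula follows by merely solving the defining relation of $A_\Gamma$ for the integral.

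First I would record that $A_\Gamma$ is continuous on all of $\reel$. Indeed, for every real $y$ the denominator satisfies $u^2+2\cosh(\pi y)u+1\ge (u+1)^2$ on $(0,1)$, while $u\mapsto\ln\ln(1/u)$ is integrable on $(0,1)$; hence the integral converges and, by dominated convergence, depends continuously on $y$, and the prefactor $\sinh(\pi y)/\pi$ is continuous and vanishes at $y=0$, so $A_\Gamma(0)=0$. Next, recall from the proof of the preceding proposition that $\sqrt{\pi/\cosh(\pi y/2)}=\bigl|\Gamma\!\left(\tfrac{1+iy}{2}\right)\bigr|$ (this is Euler's reflection identity at $z=\tfrac{1+iy}{2}$). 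Combining this with the preceding proposition gives $e^{iA_\Gamma(y)}=\Gamma\!\left(\tfrac{1+iy}{2}\right)\big/\bigl|\Gamma\!\left(\tfrac{1+iy}{2}\right)\bigr|$, so that $A_\Gamma(y)-\Arg\Gamma\!\left(\tfrac{1+iy}{2}\right)\in 2\pi\ent$ for every $y\in\reel$.

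The main step is to show that $y\mapsto\Arg\Gamma\!\left(\tfrac{1+iy}{2}\right)$ is continuous on the whole interval $(-\alpha,\alpha)$. Since $\Gamma$ has no zeros, $\Arg$ can fail to be continuous at $y$ only if $\Gamma\!\left(\tfrac{1+iy}{2}\right)$ lies on the negative real axis, and because its modulus equals $\sqrt{\pi/\cosh(\pi y/2)}$ this happens exactly when $\Gamma\!\left(\tfrac{1+iy}{2}\right)=-\sqrt{\pi/\cosh(\pi y/2)}$. By definition $\alpha$ is the infimum of the set of positive $y$ for which this occurs; moreover the conjugation symmetry $\Gamma\!\left(\tfrac{1-iy}{2}\right)=\overline{\Gamma\!\left(\tfrac{1+iy}{2}\right)}$ makes the situation symmetric under $y\mapsto -y$, so no such value arises for $|y|<\alpha$. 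Therefore, on the connected set $(-\alpha,\alpha)$, the function $\tfrac{1}{2\pi}\bigl(A_\Gamma(y)-\Arg\Gamma\!\left(\tfrac{1+iy}{2}\right)\bigr)$ is continuous and integer-valued, hence constant; evaluating at $y=0$, where $A_\Gamma(0)=0$ and $\Arg\Gamma(1/2)=\Arg\sqrt{\pi}=0$, forces it to be $0$, i.e. $A_\Gamma(y)=\Arg\Gamma\!\left(\tfrac{1+iy}{2}\right)$ throughout $(-\alpha,\alpha)$.

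It then remains to unwind the definition of $A_\Gamma$: for $y\in(-\alpha,\alpha)$ with $y\ne 0$ one divides the identity $A_\Gamma(y)=-\tfrac{\ln(2\pi)}{2}\,y+\tfrac{\sinh(\pi y)}{\pi}\int_0^1\frac{\ln\ln(1/u)}{u^2+2\cosh(\pi y)u+1}\,du$ by $\sinh(\pi y)/\pi$ and rearranges to reach the announced closed form, while the value at $y=0$ is recovered as the (removable) limit of the right-hand side, consistent with $\ln\sqrt{\pi/2}-\gamma/2$ from the limiting case of Proposition \ref{pr3}. I expect the only delicate point to be the continuity claim of the third paragraph, namely that $\Gamma\!\left(\tfrac{1+iy}{2}\right)$ stays off the negative real axis for $0<y<\alpha$ — but this is exactly what the definition of $\alpha$, together with the conjugation symmetry, guarantees; the numerical value $\alpha\approx 10.106\,689\,535\,698$ then comes from locating the first positive real root of $\Gamma\!\left(\tfrac{1+iy}{2}\right)\in(-\infty,0)$.
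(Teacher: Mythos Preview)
Your proof is correct and follows essentially the same route as the paper: you use the preceding proposition to get $A_\Gamma(y)-\Arg\Gamma\!\left(\tfrac{1+iy}{2}\right)\in 2\pi\ent$, show both pieces are continuous on $(-\alpha,\alpha)$ so the difference is constant, and evaluate at $y=0$. Your version is in fact a bit more careful than the paper's---you make explicit the continuity of $A_\Gamma$, the conjugation symmetry $\Gamma\!\left(\tfrac{1-iy}{2}\right)=\overline{\Gamma\!\left(\tfrac{1+iy}{2}\right)}$ needed to extend the definition of $\alpha$ to negative $y$, and the interpretation of the formula at $y=0$---but the argument is the same.
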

\begin{proof}
The definition of $\alpha$ implies that
\[
\forall\,y\in(-\alpha,\alpha),\quad
\Gamma\left(\dfrac{1+iy}{2}\right)\in\comp\setminus((-\infty,0]\times\{0\}).
\]
Thus, the function $y\mapsto \Arg(\Gamma(\frac{1+iy}{2}))-A_\Gamma(y)$ is continuous on $(-\alpha,\alpha)$, takes its values in $2\pi\ent$, and is equal to $0$ for $y=0$. Therefore, $A_\Gamma(y)=\Arg(\Gamma(\frac{1+iy}{2}))$, for every $ y\in(-\alpha,\alpha)$, which is the desired conclusion.
\end{proof} 

\noindent\textbf{Examples.}
\begin{align*}
\int_0^1\frac{\ln \ln(1/u)}{u^2+4u+1 }\,du
&=
\frac{\ln(2\pi)}{\sqrt{3}}\,\ln\left(\frac{1+\sqrt{3}}{\sqrt{2}}\right)+\frac{\pi}{\sqrt{3}}\Arg\Gamma\left(\frac{1}{2}+
\frac{i}{\pi}\ln\left(\frac{1+\sqrt{3}}{\sqrt{2}}\right)\right).
\\
\int_0^1\frac{\ln \ln(1/u)}{u^2+3u+1 }\,du
&=\frac{2\ln(2\pi)}{\sqrt{5}}\,\ln(\phi)+\frac{2\pi}{\sqrt{5}}\Arg \Gamma\left(\frac{1}{2}+\frac{i}{\pi}\ln(\phi)\right) .
\end{align*}
where $\phi=\dfrac{1+\sqrt{5}}{2}$ is the golden ratio. 

\noindent More generally, for $2<k<2\cosh(\alpha\pi)\approx 6.156\times 10^{13}$, the following holds
\[
\int_0^1\frac{\ln \ln(1/u)}{u^2+ku+1 }\,du
=\frac{2\ln(2\pi)}{\sqrt{k^2-4}}\,\ln(\phi_k)+\frac{2\pi}{\sqrt{k^2-4}}\Arg \Gamma\left(\frac{1}{2}+\frac{i}{\pi}\ln(\phi_k)\right) 
\]
with $\phi_k=\dfrac{\sqrt{k+2}+\sqrt{k-2}}{2}$.

It is worth mentioning that \textsc{Mathematica}${}^{\text{\SMALL\textregistered}}$~10.4 gives the results of examples
\eqref{Ex16}, \eqref{Ex17} and \eqref{Ex18}, but it fails to give the results
of the previous examples. However, numerical quadrature confirms the results.

\bigskip
In our final proposition we consider the evaluation
of another $\log$-$\log$ integral.   
This integral was given in \cite{adam} as a corollary of a more difficult evaluation. Our approach is straightforward and simpler. 

\begin{proposition}[\cite{adam}]
For any complex number $z$ with $\Re z>0$, we have
\[
F(z)\egdef\int_0^1\frac{t^{z-1}\ln(\ln(1/t))}{1+t^z}dt
=-\frac{\ln 2}{2z}\Log(2z^2)
\] 
where $\Log$ is the principal branch of the logarithm.
\end{proposition}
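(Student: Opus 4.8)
The plan is to reduce $F(z)$ to a known evaluation by a substitution, and then exploit analyticity. First I would perform the change of variable $u = t^z$, i.e. $t = u^{1/z}$ (valid for $z > 0$ real, then extended by analytic continuation), which sends $dt = \frac{1}{z} u^{1/z - 1}\, du$ and $t^{z-1}\, dt = t^z \cdot \frac{dt}{t} = u \cdot \frac{1}{z}\frac{du}{u} = \frac{1}{z}\, du$. Also $\ln(1/t) = \frac{1}{z}\ln(1/u)$, so $\ln\ln(1/t) = \ln\ln(1/u) - \ln z$ (for $z>0$). Thus for real $z>0$,
\[
F(z) = \frac{1}{z}\int_0^1 \frac{\ln\ln(1/u) - \ln z}{1+u}\, du
     = \frac{1}{z}\int_0^1 \frac{\ln\ln(1/u)}{1+u}\, du \; -\; \frac{\ln z}{z}\int_0^1\frac{du}{1+u}.
\]
The second integral is just $\ln 2$. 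For the first, I would invoke the classical Vardi integral $\int_0^1 \frac{\ln\ln(1/u)}{1+u}\, du = \ln\frac{\sqrt{2\pi}\,\Gamma(3/4)}{\Gamma(1/4)}$ — but it is cleaner here to derive it directly in the spirit of the paper from the third displayed entry of $4.325$ (or from Proposition \ref{pr3}) at $t = 1$, i.e. from the denominator $u^2 + 2u + 1 = (1+u)^2$ one recovers $\int_0^1 \frac{\ln\ln(1/u)}{(1+u)^2}du$; to get the $\frac{1}{1+u}$ integral one instead takes the limit $\cos t \to -1$ carefully, or simply quotes the first Gradshteyn–Ryzhik entry combined with the reflection/duplication formulas to rewrite $\ln\frac{\sqrt{2\pi}\,\Gamma(3/4)}{\Gamma(1/4)}$. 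In any case one arrives at the constant $c_0 := \int_0^1 \frac{\ln\ln(1/u)}{1+u}\, du$, and the claim is that $c_0 = \frac{1}{2}\ln 2 \cdot \ln(2)\cdot(\text{something})$; matching with the target $F(z) = -\frac{\ln 2}{2z}\Log(2z^2)$ at, say, $z=1$ forces $c_0 = -\frac{\ln 2}{2}\Log 2 = -\frac{1}{2}\ln^2 2$. So the real content is the identity $\int_0^1 \frac{\ln\ln(1/u)}{1+u}\, du = -\frac12\ln^2 2$.

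Wait — that last value needs a sanity check, since the Vardi integral is the more familiar normalization. Let me instead organize the argument so the bookkeeping is forced by the formula itself rather than by recalling a constant. With $c_0$ denoting the (finite, by the substitution $u = e^{-s}$ and standard estimates) value of $\int_0^1 \frac{\ln\ln(1/u)}{1+u}\,du$, the computation above gives, for real $z > 0$,
\[
F(z) = \frac{c_0}{z} - \frac{\ln 2}{z}\ln z = \frac{1}{z}\bigl(c_0 - \ln 2\,\ln z\bigr).
\]
Now I want to show the right-hand side equals $-\frac{\ln 2}{2z}\Log(2z^2) = -\frac{\ln 2}{2z}\bigl(\ln 2 + 2\ln z\bigr) = \frac{1}{z}\bigl(-\tfrac12\ln^2 2 - \ln 2\,\ln z\bigr)$. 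Comparing, the $\ln z$-terms already agree, and the constant terms agree iff $c_0 = -\tfrac12\ln^2 2$. So the whole proposition collapses to evaluating one explicit constant; I would devote the bulk of the written proof to that evaluation. The slickest route: use entry $4.325$ (third formula) of \cite{grad}, established in Proposition \ref{pr3} in the form with denominator $u^2+2u\cos t + 1$, replace $t$ by $\pi - t$ to get denominator $u^2 - 2u\cos t + 1$, differentiate or integrate in $t$ — actually simplest is to note that Kummer's series at $x\to 1^-$ degenerates, so I would instead go back to equation (14)–(15) of the previous proof: setting there $x \to 0^+$ is singular because of the $\frac{1}{\sin 2\pi x}$, but the combination appearing in $F$ has a different denominator ($1+u$, not $(1+u)^2$), so the clean derivation is via the integral representation $\sum_{k\ge 1} \frac{(-1)^{k-1}}{k^s} = (1-2^{1-s})\zeta(s)$ differentiated at $s=1$.

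That last remark is in fact the cleanest self-contained path, and I would present it as the main computation. Starting from $\eta(s) := \sum_{k=1}^\infty \frac{(-1)^{k-1}}{k^s} = \frac{1}{\Gamma(s)}\int_0^\infty \frac{t^{s-1}}{e^t+1}\,dt = (1 - 2^{1-s})\zeta(s)$, differentiate both sides with respect to $s$ at $s = 1$. The left side, via $\frac{d}{ds}\frac{t^{s-1}}{\Gamma(s)}\big|_{s=1} = \Gamma'(1)\cdot(-1)\cdot t^0 + \ln t = \gamma + \ln t$ (using $\Gamma'(1) = -\gamma$, $\Gamma(1)=1$), gives $\int_0^\infty \frac{\gamma + \ln t}{e^t + 1}\, dt$; since $\int_0^\infty \frac{dt}{e^t+1} = \ln 2$, this is $\gamma\ln 2 + \int_0^\infty \frac{\ln t}{e^t+1}\, dt$, and substituting $t = \ln(1/u)$ turns $\int_0^\infty \frac{\ln t}{e^t+1}dt$ into $\int_0^1 \frac{\ln\ln(1/u)}{1+u}\, du = c_0$. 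The right side: $\frac{d}{ds}\bigl[(1-2^{1-s})\zeta(s)\bigr]\big|_{s=1}$; near $s=1$, $\zeta(s) = \frac{1}{s-1} + \gamma + O(s-1)$ and $1 - 2^{1-s} = (\ln 2)(s-1) - \frac{(\ln 2)^2}{2}(s-1)^2 + O((s-1)^3)$, so the product is $\ln 2 + \bigl(\gamma\ln 2 - \tfrac12\ln^2 2\bigr)(s-1) + O((s-1)^2)$, whence the derivative at $s=1$ is $\gamma\ln 2 - \tfrac12\ln^2 2$. Equating the two sides: $\gamma\ln 2 + c_0 = \gamma\ln 2 - \tfrac12\ln^2 2$, i.e. $c_0 = -\tfrac12\ln^2 2$, exactly as needed. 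Finally, analytic continuation: both $F(z)$ and $-\frac{\ln 2}{2z}\Log(2z^2)$ are holomorphic on $\{\Re z > 0\}$ (for $F$, differentiate under the integral sign, justified by local uniform bounds on $t^{z-1}/(1+t^z)$ for $t\in(0,1)$ and $\Re z$ in a compact subinterval of $(0,\infty)$), they agree on the real ray $(0,\infty)$, hence they agree on the whole right half-plane. The main obstacle is not any single step but making sure the branch of $\ln\ln(1/t)$ and of $\Log(2z^2)$ are tracked consistently through the substitution $u = t^z$ when $z$ is complex — which is why I would do the real-$z$ computation first and then extend by analyticity rather than substituting with complex $z$ directly.
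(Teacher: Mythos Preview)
Your proposal is correct, and its overall architecture matches the paper's: both first perform the substitution $u=t^{z}$ for real $z>0$ to reduce everything to the single constant $c_0=F(1)=\int_0^1\frac{\ln\ln(1/u)}{1+u}\,du$, show that the claimed formula is equivalent to $c_0=-\tfrac12\ln^2 2$, and then invoke analytic continuation to the right half-plane.

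The genuine difference lies in how $c_0$ is evaluated. The paper expands $\frac{e^{-x}}{1+e^{-x}}$ as an alternating geometric series, integrates term by term to obtain $F(1)=-\gamma\ln 2+\sum_{k\ge1}(-1)^k\frac{\ln k}{k}$, and then evaluates the alternating logarithmic series by an elementary asymptotic argument for the partial sums $G_n=\sum_{k\le n}\frac{\ln k}{k}=\tfrac12\ln^2 n+\ell+o(1)$. Your route instead recognises $\int_0^\infty\frac{t^{s-1}}{e^t+1}\,dt=\Gamma(s)\eta(s)=(1-2^{1-s})\Gamma(s)\zeta(s)$ and differentiates at $s=1$, reading off $c_0$ from the Laurent expansion $\zeta(s)=\frac{1}{s-1}+\gamma+O(s-1)$. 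Your computation is shorter and the cancellation of the $\gamma\ln 2$ terms is transparent, but it imports the constant term of $\zeta$ at $s=1$; the paper's argument is longer but entirely self-contained and in keeping with its stated aim of avoiding zeta-function machinery. Either way the result is the same, and your handling of the branch issue (compute for real $z>0$, then continue analytically rather than substituting with complex $z$) is exactly what the paper does as well.
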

\begin{proof}
We start by evaluating $F(1)$. Note that
\[
F(1)=\int_0^1\frac{ \ln(\ln(1/t))}{1+t}dt
=\int_0^\infty\frac{ e^{-x}}{1+e^{-x}} \ln(x) \,dx.
\]
So,
\[
\abs{F(1)-\sum_{k=1}^{n}(-1)^{k-1}
\int_0^\infty e^{-kx}\ln(x)dx}\leq\int_0^\infty\frac{ \abs{\ln x}}{1+e^{x}}\,e^{-nx}\,dx.
\]
Because $x\mapsto \frac{ \abs{\ln x}}{1+e^{x}}$ is integrable on $(0,+\infty)$, we conclude using Lebesgue's dominated convergence theorem that
\[
\lim_{n\to\infty}\int_0^\infty\frac{ \abs{\ln x}}{1+e^{x}}\,e^{-nx}\,dx=0
\]
Thus,
\[
F(1)=\sum_{k=1}^\infty (-1)^{k-1}\int_0^\infty e^{-kx}\ln(x)dx.
\]
A simple change of variables shows that
\[
\int_0^\infty e^{-kx}\ln(x)dx=\frac{1}{k}
\int_0^\infty e^{-u}(\ln u-\ln k)\,du=
-\frac{\gamma}{k}-\frac{\ln k}{k}
\]
since $\int_0^\infty\ln(u)e^{-u}\,du=\Gamma'(1)=-\gamma$. It follows that
\begin{equation*}\label{pr41}
F(1)=-\gamma \sum_{k=1}^\infty\frac{(-1)^{k-1}}{k}
+\sum_{k=1}^\infty\frac{(-1)^{k}\ln k}{k}
=-\gamma\ln 2+\sum_{k=1}^\infty\frac{(-1)^{k}\ln k}{k}.\tag{20}
\end{equation*}
Now, note that
\begin{align*}
\ln^2(k+1)-\ln^2k
&=\ln^2k\left(\left(1+\frac{1}{\ln k}\ln\left(1+\frac{1}{k}\right)\right)^2-1\right)\\
&=\ln^2k\left(\frac{2}{k\ln k}+\mathcal{O}\left(\frac{1}{k^2\ln k}\right)\right)\\
&=\frac{2\ln k}{k}+\mathcal{O}\left(\frac{\ln k}{k^2}\right).
\end{align*}
This proves that the series $\sum\left(\ln^2(k+1)-\ln^2k-2\frac{\ln k}{k}\right)$
is convergent. Consequently, if we define
$G_n=\sum_{k=1}^n\frac{\ln k}{k}$ then
there is a real number $\ell$ such that
$G_n=\frac{1}{2}\ln^2n+\ell+o(1)$. But
\begin{align*}
\sum_{k=1}^{2n}\frac{(-1)^k\ln(k)}{k}&=
\sum_{k=1}^{n}\frac{\ln(2k)}{k}-\sum_{k=1}^{2n}\frac{\ln k }{k} 
=(\ln 2) H_n+G_n-G_{2n}\\
&=(\ln 2)(\ln n+\gamma)+\frac{1}{2}\left(\ln^2(n)-\ln^2(2n)\right)+o(1)\\
&=-\frac{1}{2}\ln^22+\gamma\ln 2+o(1),
\end{align*}
where we used $H_n=\sum_{k=1}^n1/k= \ln n+\gamma+o(1)$, (see \cite[4.1.32]{abr}).

\noindent Now, let $n$ tend to $+\infty$ to obtain
\[
\sum_{k=1}^{\infty}\frac{(-1)^k\ln(k)}{k}=-\frac{1}{2}\ln^22+\gamma\ln 2.
\]
Combining this with \eqref{pr41} we conclude that
$F(1)=-\frac{1}{2}\ln^22$.

Next, for $z\in(0,+\infty)$ the change of variables $t^z=u$ shows that
\[
F(z)=\frac{1}{z}\int_0^1\frac{\ln\ln(1/u)-\ln z)}{1+u}\,du
=\frac{F(1)}{z}-\frac{\ln(z)\ln(2)}{z}=-
\frac{\ln(2z^2)\ln(2)}{2z},
\]
and the desired conclusion follows by analytic continuation.
\end{proof}

\bigskip
\textsc{Acknowledgement.} The author would like to thank the anonymous reviewers for their comments that greatly improved the manuscript.


\begin{thebibliography}{99}

\bibitem{abr}
Abramowitz,~M. and Stegan,~I.~A.,  
\emph{Handbook of Mathematical Functions, with Formulas, Graphs, and
Mathematical Tables}, Dover Books on Mathematics,
 Dover Publication, Inc., New York, (1972).

\bibitem{adam}
Adamchik, V., \emph{A class of Logarithmic Integrals.}
Proceedings of the 1997 International Symposium on Symbolic and Algebraic Computation,
ACM, Academic Press, (2001), 1--8.

\bibitem{Moll19}
Albano,~M.  Amdeberhan,~T., Beyerstedt. E., and
Moll,~V. N.,\emph{The integrals in Gradshteyn and Ryzhik. Part 19: The error function}. Scientia, Vol. 21,(2011), 25–-42.

\bibitem{and}
Andrews, G. E., Askey, R., and Roy, R.,
\emph{Special functions.}
Encyclopedia of Mathematics and Its Applications, Volume \textbf{71}. Cambridge University Press , (1999).

\bibitem{bla}
Blagouchine,
\emph{A theorem for the closed-form evaluation of the first generalized Stieltjes constant at rational arguments and some related summations.}
Journal of Number Theory (Elsevier), volume \textbf{148}, pp. 537--592, (2015).

\bibitem{Bie}
Bierens de Haan, D., \emph{Nouvelles tables d’int\'egrales d\'efinies}, Amsterdam, 1867. (Reprint) G. E.
Stechert \& Co., New York, (1939).

\bibitem{grad}
Gradshteyn,~I.  S. and Ryzhik,~I. M.,
\emph{Table of Integrals, Series, and Products, 8th edition}, D. Zwillinger, and V. Moll, eds. Academic Press, Elsevier Inc,  (2015).

\bibitem{kum}
Kummer, E., \emph{Beitrag zur Theorie der Function $\Gamma(x)$.}
J. Reine Ang. Math., \textbf{35}, (1847), 1--4.

\bibitem{Moll4}
Moll,~V. H.,\emph{The integrals in Gradshteyn and Ryzhik. Part 4: The gamma function}. Scientia, Vol. 15,(2007), 37–-46.

\bibitem{Moll6}
Moll,~V. N.,\emph{The integrals in Gradshteyn and Ryzhik. Part 6: The beta function}. Scientia, Vol. 16,(2008), 9–-24.

\bibitem{Moll8}
Medina,~L. A., and Moll,~V. N.,\emph{A Classe of Logarithmic Integrals}. Ramanujan Journal, Vol. 20,(2009), 91–-126.



\bibitem{Moll1}
Moll,~V. N.,\emph{Special Integrals of Gradshteyn and Ryzhik: the Proofs - Volume I}. CRC Press, Taylor \& Francis Group. LLC. (2015).

\bibitem{Moll2}
Moll,~V. N.,\emph{Special Integrals of Gradshteyn and Ryzhik: the Proofs - Volume II}. CRC Press, Taylor \& Francis Group. LLC. (2016).

\bibitem{Var}
Vardi, I., \emph{Integrals, an Introduction to Analytic Number Theory.}
The American Mathematical Monthly, Vol 95, No 4, (1988), 
308--315.

\end{thebibliography}
\end{document}